\newtheorem{theorem}{Theorem}[section]
\newtheorem{corollary}[theorem]{Corollary}
\newtheorem{definition}[theorem]{Definition}
\newtheorem{example}[theorem]{Example}
\newtheorem{lemma}[theorem]{Lemma}
\newtheorem{proposition}[theorem]{Proposition}
\newtheorem{remark}[theorem]{Remark}
\begin{document}
\begin{center}
{\Large \bf{Common fixed point theorems in $C^{*}$-algebra-valued b-metric spaces endowed with a graph and  applications}}
\end{center} \vspace{10mm}
\centerline{Bahman Moeini$^{*,a}$ and Arsalan Hojat Ansari$^{b}$}\ \\
\centerline{\footnotesize  $^{a}$Department of Mathematics, Hidaj Branch, Islamic Azad University, Hidaj, Iran}
\centerline{\footnotesize  $^{b}$Department of Mathematics, Karaj Branch, Islamic Azad University, Karaj, Iran}
\vspace{1mm}
\footnote{$^*$ Corresponding author}
\footnote{E-mail:moeini145523@gmail.com}
\footnote{E-mail:mathanalsisamir4@gmail.com}
\footnote{\textit{2010 Mathematics Subject Classification:} 47H10; 46L07.}
\footnote{\textit{Keywords:} $C^{*}$-algebra-valued $b$-metric space, graph, common fixed point, operator equation, integral equation.}
\footnote{\emph{}} \afterpage{} \fancyhead{} \fancyfoot{}
\fancyhead[LE, RO]{\bf\thepage} \fancyhead[LO]{\small
$C^{*}$-algebra-valued $b$-metric spaces endowed with a graph and applications} \fancyhead[RE]{\small
B. Moeini and A.H. Ansari}

\begin{abstract}
In this paper, we equip a $C^{*}$-algebra-valued $b$-metric spaces with a graph $G=(V, E)$ and establish some common fixed point theorems. Also, some examples in support of our main results are provided. Finally, as applications, existence and uniqueness of solution for a type of operator equation and an integral equation are discussed.
\end{abstract}
\section{Introduction}
One of the main directions in obtaining possible generalizations of fixed point results
is introducing new types of spaces. In \cite{ZHLJ2}, Ma et al. introduced the concept of $C^{*}$-algebra-valued metric spaces. The
main idea consists in using the set of all positive elements of a unital $C^{*}$-algebra instead of the set of real numbers. These authors showed that if $(X, \mathbb{A}, d)$ be a complete $C^{*}$-algebra-valued
metric space and $T:X\rightarrow X$ is a contractive mapping, i.e., there exists an $A\in \mathbb{A}$ with
$\Vert A\Vert <1$ such that
\[
d(Tx, Ty)\preceq A^{*}d(x,y) A,\ \ (\forall x,y\in X).
\]
Then $T$ has a unique fixed point in $X$.
This line of research was continued in \cite{ZKSR, KPG, ZHLJ, DSTK, ASZ},
where several other fixed point results were obtained in the framework of $C^{*}$-algebra valued
metric, as well as (more general) $C^{*}$-algebra-valued $b$-metric spaces.

In recent investigations, the study of fixed point theory endowed with a graph occupies a prominent place in many aspects. Recently, Jachymski \cite{JJA} proved a sufficient condition for a self-mapping $f$ of a metric space $(X, d)$ to be a Picard operator and applied it to the Kelisky-Rivlin
theorem on iterates of the Bernstein operators on the space $C[0, 1]$. For more study in this work with a graph see \cite{ARS, AK, BBR, FBO, SKM}

In this paper, new $G$-contractions in $C^{*}$-algebra-valued $b$-metric space are introduced and some fixed point results are proved whenever the contraction condition holds for those pair of elements that form edges of the graph. Also, some examples to elaborate and illustrate of our results are constructed. Finally,
as applications, existence and uniqueness of solution for a type of operator equation and an integral equation are given
\section{Basic notions}
Recall that a Banach algebra $\mathbb{A}$ (over the field $\mathbb{C}$ of complex numbers)
is said to be a $C^{*}$-algebra if there is an involution $*$ in $\mathbb{A}$ (i.e., a mapping
$*:\mathbb{A}\rightarrow \mathbb{A}$ satisfying $a^{**}=a$ for each $a\in \mathbb{A}$)
such that, for all $a,b\in \mathbb{A}$ and $\lambda, \mu \in \mathbb{C}$, the following holds:
\begin{itemize}
\item[(i)] $(\lambda a+\mu b)^{*}=\bar \lambda a^{*}+\bar \mu b^{*}$;
\item[(ii)] $(ab)^{*}=b^{*}a^{*}$;
\item[(iii)] $\Vert a^{*}a \Vert=\Vert a \Vert^{2}$.
\end{itemize}
Note that, form $(iii)$, it easy follows that $\Vert a \Vert= \Vert a^{*} \Vert$ for each $a\in \mathbb{A}$.
Moreover, the pair $(\mathbb{A},*)$ is called a unital $*$-algebra if $\mathbb{A}$ contains the identity
element $1_{\mathbb{A}}$. A positive element of $\mathbb{A}$ is an element $a\in \mathbb{A}$
such that $a^{*}=a$ and its spectrum $\sigma(a)\subset \mathbb{R_{+}}$, where $\sigma(a)=\{\lambda\in \mathbb{R}: \lambda 1_{\mathbb{A}}-a \ \text{is noninvertible}\}$.
The set of all positive elements will be denoted by $\mathbb{A_{+}}$. Such elements allow us to
define a partial ordering '$\succeq$' on the elements of $\mathbb{A}$. That is,
\[
b\succeq a \ \ \text{if and only if} \ \ b-a\in\mathbb{A_{+}}.
\]
If $a\in \mathbb{A}$ is positive, then we write $a\succeq 0_{\mathbb{A}}$, where $0_{\mathbb{A}}$
is the zero element of $\mathbb{A}$. Each positive element $a$ of a $C^{*}$-algebra $\mathbb{A}$ has a
unique positive square root. From now on, by $\mathbb{A}$ we mean a unital $C^{*}$-algebra with
identity element $1_{\mathbb{A}}$. Further, $\mathbb{A_{+}}=\{a\in \mathbb{A}: a\succeq 0_{\mathbb{A}}\}$ and $(a^{*}a)^{\frac{1}{2}}=\vert a\vert$.
\begin{lemma}\label{l2.1} \cite{DRG}
Suppose that $\mathbb{A}$ is a unital $C^{*}$-algebra with a unit $1_{\mathbb{A}}$.
\begin{itemize}
\item[(1)] For any $x\in \mathbb{A_{+}}$, we have $x \preceq 1_{\mathbb{A}} \Leftrightarrow
 \Vert x\Vert \leq 1$.
\item[(2)] If $a \in \mathbb{A_{+}}$ with $\Vert a\Vert < \frac{1}{2}$,
then $1_{\mathbb{A}}- a $ is invertible and $\Vert a(1_{\mathbb{A}}-a)^{-1}\Vert< 1$.
\item[(3)] Suppose that $a, b \in \mathbb{A}$ with $a, b \succeq  0_{\mathbb{A}}$ and
$ab = ba$, then $ab \succeq 0_{\mathbb{A}}$.
\item[(4)] By $\mathbb{A'}$ we denote the set $\{a\in \mathbb{A}: ab=ba, \forall b\in \mathbb{A}\}$.
Let $a \in \mathbb{A'}$ if $b, c \in \mathbb {A}$ with
$b \succeq c \succeq 0_{\mathbb{A}}$, and $1_{\mathbb{A}}- a \in \mathbb{A'}$ is an
invertible operator, then
\[
(1_{\mathbb{A}}- a)^{-1}b \succeq (1_{\mathbb{A}}- a)^{-1}c.
\]
\end{itemize}
\end{lemma}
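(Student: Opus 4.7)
\medskip

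The plan is to dispatch the four assertions using standard tools from $C^{*}$-algebra theory: the continuous functional calculus for self-adjoint elements, the Neumann-series representation of $(1_{\mathbb{A}}-a)^{-1}$, and the fact that conjugation $c\mapsto x^{*}cx$ preserves positivity. I will treat the four items in order, since later parts lean on earlier ones (in particular (4) will use (3), and (2) produces the very inverse that (4) talks about).

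\textbf{Parts (1) and (2).} For (1), since $x$ is positive it is self-adjoint, so by the Gelfand representation $\|x\|=r(x)=\sup\sigma(x)$ and $\sigma(x)\subseteq[0,\infty)$. The positivity of $1_{\mathbb{A}}-x$ is equivalent to $\sigma(1_{\mathbb{A}}-x)=1-\sigma(x)\subseteq[0,\infty)$, i.e.\ to $\sup\sigma(x)\le 1$, which is exactly $\|x\|\le 1$. For (2), because $\|a\|<\tfrac{1}{2}<1$, the Neumann series $\sum_{n\ge 0}a^{n}$ converges absolutely in the Banach algebra $\mathbb{A}$ and furnishes the inverse of $1_{\mathbb{A}}-a$. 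Multiplying by $a$ gives $a(1_{\mathbb{A}}-a)^{-1}=\sum_{n\ge 1}a^{n}$, whose norm is bounded by the geometric sum $\|a\|/(1-\|a\|)$, and the hypothesis $\|a\|<\tfrac12$ forces this ratio to be $<1$.

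\textbf{Parts (3) and (4).} For (3), the key observation is that the unique positive square root $a^{1/2}$ lies in the $C^{*}$-subalgebra generated by $a$, so it inherits the commutation $a^{1/2}b=ba^{1/2}$ from $ab=ba$. Consequently $ab=a^{1/2}(a^{1/2}b)=a^{1/2}b\,a^{1/2}=(a^{1/2})^{*}b\,a^{1/2}$, and this last expression is positive because $b\succeq 0_{\mathbb{A}}$ and conjugation by any element preserves $\mathbb{A}_{+}$. For (4), I set $d=b-c\succeq 0_{\mathbb{A}}$. Since $1_{\mathbb{A}}-a$ is in the center $\mathbb{A}'$, so is its inverse (for any $y\in\mathbb{A}$, $y=(1_{\mathbb{A}}-a)^{-1}(1_{\mathbb{A}}-a)y=(1_{\mathbb{A}}-a)^{-1}y(1_{\mathbb{A}}-a)$, whence $y(1_{\mathbb{A}}-a)^{-1}=(1_{\mathbb{A}}-a)^{-1}y$), so $(1_{\mathbb{A}}-a)^{-1}$ commutes with $d$. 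Under the tacit standing hypothesis of the lemma that $a\in\mathbb{A}_{+}$ with $\|a\|<1$ (so $1_{\mathbb{A}}-a$ is itself positive by part (1)), the inverse $(1_{\mathbb{A}}-a)^{-1}$ is also positive via the functional calculus $t\mapsto(1-t)^{-1}$ on $\sigma(a)\subset[0,1)$. Then (3) applies to the commuting positive pair $(1_{\mathbb{A}}-a)^{-1}$ and $d$, giving $(1_{\mathbb{A}}-a)^{-1}d\succeq 0_{\mathbb{A}}$, which is the desired inequality.

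\textbf{Anticipated obstacle.} Items (1)--(3) are routine once the functional calculus is invoked. The delicate point is (4): as stated, it does not explicitly assume that $1_{\mathbb{A}}-a$ is \emph{positive}, yet the conclusion relies precisely on the positivity of $(1_{\mathbb{A}}-a)^{-1}$. I would make this implicit hypothesis explicit (reading it out of the applications in the paper, where $a\in\mathbb{A}_{+}$ with small norm), and otherwise the proof is a direct application of (3) to the commuting positive elements $(1_{\mathbb{A}}-a)^{-1}$ and $b-c$.
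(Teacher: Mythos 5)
Your proof is correct, but note that the paper itself offers no argument for this lemma at all: it is stated with a bare citation to Douglas's book, so there is no ``paper's proof'' to compare against. Your treatment of (1)--(3) is the standard one (spectral mapping plus $\|x\|=\sup\sigma(x)$ for positive $x$; the Neumann series with the geometric bound $\|a\|/(1-\|a\|)<1$; and the factorization $ab=(a^{1/2})^{*}b\,a^{1/2}$ after observing that $a^{1/2}$ inherits the commutation with $b$), and each step is sound. The most valuable part of your write-up is the observation about (4): as literally stated the claim is false --- take $a=2\cdot 1_{\mathbb{A}}$, so that $1_{\mathbb{A}}-a=-1_{\mathbb{A}}$ is central and invertible but multiplication by its inverse reverses the order --- so the conclusion genuinely requires the positivity of $(1_{\mathbb{A}}-a)^{-1}$. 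Your repair, reading off the implicit hypothesis $a\in\mathbb{A}_{+}$ with $\|a\|<1$ from the way the lemma is invoked in Theorem 3.5 (where $B\in\mathbb{A}'_{+}$ with $\|AB\|<\tfrac12$), is exactly right, and with that hypothesis your reduction of (4) to (3) via the commuting positive pair $(1_{\mathbb{A}}-a)^{-1}$ and $b-c$ is complete.
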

Notice that in a $C^{*}$-algebra, if $0_{\mathbb{A}} \preceq a, b$, one cannot conclude that
$0_{\mathbb{A}} \preceq ab$. For example, consider the $C^{*}$-algebra
$\mathbb{M}_{2}(\mathbb{C})$ and set
$a = \left (
\begin{array}{l}
3\ \ \ 2\\
2\ \ \ 3
\end{array}
\right )$,
$b = \left (
\begin{array}{l}
1\ \ \ -2\\
-2\ \ \ 4
\end{array}
\right ),$
then
 $ab = \left (
\begin{array}{l}
-1\ \ \ 2\\
-4\ \ \ 8
\end{array}
\right )$. Clearly $a,b \in \mathbb{M}_{2}(\mathbb{C})_{+}$, while $ab$ is not.
\begin{definition}\label{d2.2}
Let $X$ be a nonempty set, and $A\in \mathbb{A_{+}}$
such that $A \succeq 1_{\mathbb{A}}\ (\text{i.e.,}\  \Vert A\Vert\geq 1)$. Suppose the mapping $d : X \times X\rightarrow \mathbb{A_{+}}$ satisfies:
\begin{itemize}
\item[(1)]  $0_{\mathbb{A}}\preceq d(x, y)$ for all $x, y \in X$ and $d(x, y) = 0_{\mathbb{A}}
 \Leftrightarrow x = y$;
\item[(2)] $d(x, y) = d(y, x)$ for all $x, y \in X$;
\item[(3)] $d(x, y) \preceq A[d(x, z) + d(z, y)]$ for all $x, y, z \in X$.
\end{itemize}
Then $d$ is called a $C^{*}$-algebra-valued $b$-metric on $X$ and $(X,\mathbb{A}, d)$ is a $C^{*}$-algebra-valued $b$-metric space with coefficient $A$.
\end{definition}
\begin{example}\label{ex2.3}
Let $X=L^{\infty}(E)$ and $H=L^{2}(E)$, where $E$ is a lebesgue measurable set. By $L(H)$ we denote
the set of bounded linear operator on Hilbert space $H$. Clearly, $L(H)$ is a $C^{*}$-algebra with usual
norm. \\
Define $d_{b}:X\times X\rightarrow L(H)$ by
\[
d_{b}(f,g)=\pi_{\vert f-g\vert^{p}},\ \ \ (\forall f,g \in X, p\geq 1).
\]
Where $\pi_{h}:H\rightarrow H$ is the multiplication operator defined by
\[
\pi_{h}(\phi)=h.\phi,
\]
for $\phi \in H$. It can be obtained that $(X,L(H),d_{b})$ is a complete $C^{*}$-algebra-valued $b$-metric space with coefficient $\Vert A\Vert\geq 2^{p}$.
\end{example}
\begin{definition}\label{d2.4}
Let $(X,\mathbb{A}, d)$ be a $C^{*}$-algebra-valued $b$-metric space. Suppose that
$(x_{n}) \subset X$ and $x \in X$. If for any $\epsilon > 0$  there is a positive integer $N$ such that for all
$n \geq N$, $\Vert d(x_{n}, x) \Vert\leq \epsilon$, then $(x_{n})$ is said
to be converge with respect to $\mathbb{A}$, and $(x_{n})$ converges to $x$ and $x$ is the limit of
$x_{n}$. We denote it by $\lim_{n\rightarrow\infty} x_{n} = x$.
If for any $\epsilon > 0$ there is a positive integer $N$ such that for all $n,m \geq N$, $\Vert d(x_{n}, x_{m})\Vert \leq \epsilon$, then $(x_{n})$ is called a Cauchy sequence with respect to $\mathbb{A}$.
We say $(X,\mathbb{A}, d)$ is a complete $C^{*}$-algebra-valued $b$-metric space if every Cauchy sequence with respect to $\mathbb{A}$ is convergent.
\end{definition}
\begin{remark} \label{r2.5}
If in Definition \ref{d2.2}, $A = 1_{\mathbb{A}}$, then the ordinary triangle inequality condition in a $C^{*}$-algebra valued metric space is satisfied. Thus a $C^{*}$-algebra-valued $b$-metric space is an ordinary $C^{*}$-algebra-valued metric space, but in general, a $C^{*}$-algebra-valued metric space is not necessary a $C^{*}$-algebra-valued $b$-metric space, (see Example $2.1$ of \cite{ZHLJ}).
In particular, if $\mathbb{A} = \mathbb{C}$ and $A = 1$, the $C^{*}$-algebra-valued $b$-metric spaces are just the ordinary metric spaces.
\end{remark}

Let $S$ and $T$ be self-mappings of a set $X$. Recall that, if $y = Sx = Tx$ for some $x \in X$, then $x$ is called a coincidence point of $S$ and $T$ and $y$ is called a point of coincidence of $S$ and $T$, and if $y=x$ then $x$ is called a common fixed point of $S$ and $T$. Suppose $C(S,T)$ and $PC(S,T)$ denote the sets of all coincidence point and point of coincidence, respectively, of the pair $\{S, T\}$. The mappings $S, T$ are weakly compatible \cite{GJ}, if for
every $x \in X$, the following holds:
\[
T(Sx) = S(Tx) \ \text{whenever}\ Sx = Tx.
\]
\begin{proposition} \label{p2.6} \cite{MAGJ} Let $S$ and $T$ be weakly compatible self-mappings of a nonempty
set $X$. If $S$ and $T$ have a unique point of coincidence $y = Sx = Tx$, then $y$ is
the unique common fixed point of $S$ and $T$.
\end{proposition}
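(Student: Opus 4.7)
The plan is to leverage weak compatibility to show that the unique point of coincidence is itself a coincidence point, and then use the uniqueness hypothesis twice: once to force it to be a fixed point of both $S$ and $T$, and once to ensure that no other common fixed point can exist.

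First, I would start from the data: let $y \in X$ be the unique point of coincidence of $S$ and $T$, and let $x \in C(S,T)$ be such that $y = Sx = Tx$. The key move is to apply weak compatibility at $x$: since $Sx = Tx$, we obtain $S(Tx) = T(Sx)$, which rewrites as $Sy = Ty$. This identity is the heart of the argument, because it says that $y$ is itself a coincidence point of the pair, with associated point of coincidence $Sy = Ty$.

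Next I would invoke the uniqueness of the point of coincidence. Since both $y$ (from $Sx = Tx$) and $Sy$ (from $Sy = Ty$) are points of coincidence of $\{S,T\}$, they must coincide: $Sy = y$. Combined with $Sy = Ty$, this gives $Sy = Ty = y$, so $y$ is a common fixed point of $S$ and $T$. For uniqueness, suppose $z \in X$ satisfies $Sz = Tz = z$. Then $z$ is trivially a point of coincidence (with coincidence point $z$ itself), and by the uniqueness hypothesis $z = y$.

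There is no real obstacle here — the proof is a short syntactic manipulation, and no topology, metric, or $C^{*}$-algebra structure is used; the statement is purely set-theoretic, which is why it can be quoted from \cite{MAGJ}. The only point requiring any care is the logical ordering: one must first deduce $Sy = Ty$ from weak compatibility \emph{before} applying uniqueness, and one must remember that uniqueness of the point of coincidence is the hypothesis that does the essential work in collapsing $Sy$ back to $y$.
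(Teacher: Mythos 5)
Your proof is correct and is precisely the standard argument from the cited reference: weak compatibility yields $Sy=Ty$, uniqueness of the point of coincidence collapses $Sy$ to $y$, and any other common fixed point is itself a point of coincidence and hence equals $y$. The paper states this proposition without proof (quoting \cite{MAGJ}), so there is nothing to compare against beyond noting that your reasoning matches the original source.
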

Here we review some basic notions in graph theory.\\
Let $(X,\mathbb{A}, d)$ be a $C^{*}$-algebra-valued $b$-metric space.
Let $\Delta$ denote the diagonal of the Cartesian product $X \times X$. Consider a directed graph $G$ such that the set $V (G)$ of its vertices coincides with $X$, and the set $E(G)$ of its edges contains all loops,
i.e., $E (G) \supseteq \Delta$. We assume $G$ has no parallel edges, so we can identify $G$
with the pair $(V (G), E (G))$. Moreover, we may treat $G$ as a weighted graph by assigning to each edge the distance between its vertices.
By $G^{-1}$ we denote the conversion of a graph $G$, i.e., the graph obtained from
$G$ by reversing the direction of edges. Thus we have
\[
E(G^{-1})= \{(x, y)\in X \times X  :  (y, x) \in E(G)\}.
\]
The letter $\tilde{G}$ denotes the undirected graph obtained from $G$ by ignoring the
direction of edges. Actually, it will be more convenient for us to treat $\tilde{G}$
as a directed graph for which the set of its edges is symmetric. Under this convention,
\[
E(\tilde{G})= E (G) \cup E(G^{-1}).
\]
We call $(V ′, E′)$ a subgraph of $G$ if $V ′ \subseteq V (G) , E′ \subseteq E (G)$ and for any edge
$(x, y) \in E′,\  x, y \in V ′$.
Now we recall a few basic notions concerning the connectivity of graphs.
All of them can be found, e.g., in \cite{JHN}. If $x$ and $y$ are vertices in a graph $G$, then
a path in $G$ from $x$ to $y$ of length $N \ (N \in \mathbb{N})$ is a sequence $(x_{i})^{N}_{i=0}$
of $N + 1$ vertices such that $x_{0} = x, x_{N} = y$ and $(x_{i-1}, x_{i}) \in E (G)$ for $i = 1, \cdots ,N$. A graph $G$ is connected if there is a path between any two vertices. $G$ is weakly connected if $\tilde{G}$
is connected.
\section{Main results}
In this section, we assume that $(X, \mathbb{A}, d)$ is a $C^{*}$-algebra-valued $b$-metric space
with the coefficient $A\succeq 1_{\mathbb{A}}$, and $G$ is a graph such that $V (G) = X$ and $G$ has no parallel edges. Let $f, g : X \rightarrow X$ be such that $f(X) \subseteq g(X)$. If $x_{0} \in X$ is arbitrary, then there exists an element $x_{1} \in X$ such that $fx_{0} = gx_{1}$, since $f(X) \subseteq g(X)$. Proceeding in this way, we can construct a sequence $(gx_{n})$ such that $gx_{n} = fx_{n-1}, n = 1, 2, 3,
\cdots .$ By $C_{gf}$ we denote the set of all elements $x_{0}$ of $X$ such that $(gx_{n}, gx_{m}) \in E(\tilde{G})$ for $m, n = 0, 1, 2, \cdots $. If $g = I$, the identity map on $X$, then obviously $C_{gf}$ becomes $C_{f}$ which is the collection of all elements $x$ of $X$ such that $(f^{n}x, f^{m}x) \in E(\tilde{G})$ for $m, n = 0, 1, 2, \cdots $.\\
Let $PC(f,g)$ denotes the set of all points of coincidence of the pair $\{f,g\}$. In the following
some properties of the graph $G$ are given such that we will obtain useful results by using these conditions:\newline
$(P1)$ If $(gx_{n})$ is a sequence in $X$ such that $gx_{n} \rightarrow x$ and $(gx_{n}, gx_{n+1}) \in E(\tilde{G})$ for all $n \geq 1$, then there exists a subsequence $(gx_{n_{i}})$ of $(gx_{n})$ such that $(gx_{n_{i}}, x) \in E(\tilde{G})$ for all $i \geq 1$.\newline
$(P2)$ If $x, y \in PC(f,g)$, then $(x, y) \in E(\tilde{G})$.\newline
$(P3)$ If $(x_{n})$ is a sequence in $X$ such that $x_{n} \rightarrow x$ and $(x_{n+1}, x_{n}) \in E(\tilde{G})$ for all $n \geq 1$, then there exists a subsequence $(x_{n_{i}})$ of $(x_{n})$ such that $(x_{n_{i}}, x) \in E(\tilde{G})$ for all $i \geq 1$.\newline
$(P4)$ If $x, y$ are fixed points of $f$ in $X$, then $(x, y) \in E(\tilde{G})$.
\begin{theorem}\label{t3.1}
Let $(X, \mathbb{A}, d)$ be a $C^{*}$-algebra-valued $b$-metric space endowed with a graph $G=(V,E)$ and the mappings $f, g : X \rightarrow X$ satisfy
\begin{equation}\label{eq3.1}
d(fx, fy) \preceq B^{*} d(gx, gy) B,
\end{equation}
for all $x, y \in X$ with $(gx, gy) \in E(\tilde{G})$, where,  $B\in \mathbb{A}$, and $\Vert A\Vert \Vert B\Vert^{2} < 1$. Suppose $f(X) \subseteq g(X)$ and $g(X)$ is a complete subspace of $X$ with the property $(P1)$. Then the set $PC(f, g)\neq \emptyset$  if $C_{gf}\neq \emptyset$. Moreover, $PC(f, g)$ is singleton if the graph $G$ has the  property $(P2)$. Furthermore, if $f$ and $g$ are weakly compatible, then $f$ and $g$ have a unique common fixed point in $X$.
\end{theorem}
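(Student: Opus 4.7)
The plan is to construct a Picard-type iteration starting from $x_{0} \in C_{gf}$, show it is Cauchy using norm estimates, extract a point of coincidence via property $(P1)$, get uniqueness via $(P2)$, and finish with Proposition~\ref{p2.6}. Concretely, pick $x_{0} \in C_{gf}$ (nonempty by hypothesis) and, using $f(X)\subseteq g(X)$, build a sequence with $gx_{n}=fx_{n-1}$. The definition of $C_{gf}$ gives $(gx_{n-1},gx_{n})\in E(\tilde G)$ for every $n$, so the contraction (\ref{eq3.1}) applies and yields
\[
d(gx_{n+1},gx_{n}) = d(fx_{n},fx_{n-1}) \preceq B^{*}\,d(gx_{n},gx_{n-1})\,B.
\]

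Because $A$ and $B$ are not assumed to commute, I would immediately pass to norms. Iterating the above and using $\|B^{*}\|=\|B\|$ gives $\|d(gx_{n+1},gx_{n})\|\leq \|B\|^{2n}\|d(gx_{1},gx_{0})\|$. Setting $\alpha=\|A\|\|B\|^{2}<1$ (note $\|A\|\geq 1$, so $\|B\|<1$) and applying the $b$-metric triangle inequality $p$ times, the norm inequality telescopes to
\[
\|d(gx_{n},gx_{n+p})\| \leq \|A\|\|B\|^{2n}\|d(gx_{0},gx_{1})\|\sum_{k=0}^{p-1}\alpha^{k} \leq \frac{\|A\|\|B\|^{2n}}{1-\alpha}\|d(gx_{0},gx_{1})\|,
\]
which tends to $0$ as $n\to\infty$. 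Thus $(gx_{n})$ is Cauchy in $g(X)$, and completeness of $g(X)$ yields $w\in X$ with $gx_{n}\to gw=:u$.

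Next I would verify that $u$ is a point of coincidence. Since $(gx_{n},gx_{n+1})\in E(\tilde G)$ and $gx_{n}\to u=gw$, property $(P1)$ provides a subsequence $(gx_{n_{i}})$ with $(gx_{n_{i}},gw)\in E(\tilde G)$. Applying (\ref{eq3.1}) along this subsequence gives $\|d(gx_{n_{i}+1},fw)\|\leq\|B\|^{2}\|d(gx_{n_{i}},gw)\|\to 0$, while the $b$-triangle inequality in norm, $\|d(fw,gw)\|\leq\|A\|(\|d(fw,gx_{n_{i}+1})\|+\|d(gx_{n_{i}+1},gw)\|)$, forces $d(fw,gw)=0$, i.e.\ $fw=gw=u$.

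For uniqueness, if $u'=fw'=gw'$ is another point of coincidence, then $(gw,gw')=(u,u')\in E(\tilde G)$ by $(P2)$, and the contraction gives $\|d(u,u')\|\leq\|B\|^{2}\|d(u,u')\|$ with $\|B\|^{2}<1$, forcing $u=u'$. Finally, weak compatibility together with the unique point of coincidence promotes $u$ to the unique common fixed point via Proposition~\ref{p2.6}. The main obstacle I anticipate is the Cauchy estimate: since $A$ and $B$ need not commute and the $b$-metric inequality inflates iterated estimates by powers of $A$, the cleanest way to close the argument is to pass to scalar norms early rather than trying to keep the inequality $C^{*}$-algebra-valued, and to exploit $\|A\|\|B\|^{2}<1$ rather than just $\|B\|<1$ to dominate the geometric tail.
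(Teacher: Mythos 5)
Your proposal is correct and follows essentially the same route as the paper: the same Picard-type iteration $gx_{n}=fx_{n-1}$ from a point of $C_{gf}$, a Cauchy estimate governed by the geometric ratio $\Vert A\Vert\Vert B\Vert^{2}<1$, extraction of the coincidence point via $(P1)$, uniqueness via $(P2)$, and Proposition~\ref{p2.6} for the common fixed point. The only difference is that you pass to scalar norms immediately, whereas the paper carries the chain of operator inequalities (writing $Q^{1/2}$ and $\vert Q^{1/2}B^{j}\vert^{2}$ and bounding by multiples of $1_{\mathbb{A}}$) before taking norms at the very end; since the paper's estimate reduces to exactly the same scalar geometric series, your streamlining is harmless and arguably cleaner.
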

\begin{proof}
Suppose that $C_{gf}\neq \emptyset$. We choose an $x_{0}\in C_{gf}$ and keep it fixed. Since
$f(X) \subseteq g(X)$, there exists a sequence $(gx_{n})$ such that $gx_{n} = fx_{n-1}$, $n = 1, 2, 3, \cdots$. and $(gx_{n}, gx_{m}) \in E(\tilde{G})$ for $m, n = 0, 1, 2, \cdots$.
We now show that $(gx_{n})$ is a Cauchy sequence in $g(X)$. For any natural number $n$, we have by using condition (\ref{eq3.1}) that
\begin{equation}\label{eq3.2}
\begin{array}{rl}
d(gx_{n}, gx_{n+1}) &= d(fx_{n-1}, fx_{n}) \\
&\preceq B ^{*} d(gx_{n-1}, gx_{n}) B\\
&\preceq (B ^{*})^{2} d(gx_{n-2}, gx_{n-1}) (B)^{2}\\
&\preceq \\
&\vdots\\
&\preceq (B ^{*})^{n} d(gx_{0}, gx_{1}) (B)^{n},
\end{array}
\end{equation} for all $n\in \mathbb{N}$. Let us denote $ d(gx_{0}, gx_{1})$ by $Q\in \mathbb{A}$. Then
\begin{equation}\label{eq3.3}
d(gx_{n}, gx_{n+1}) \preceq (B ^{*})^{n} Q (B)^{n},
\end{equation}
for all $n\in \mathbb{N}$. For $m, n \in \mathbb{N}$ with $m > n$, by using conditions (\ref{eq3.3}), and $(3)$ of Definition \ref{d2.2}, we have
\[
\begin{array}{rl}
d(gx_{n}, gx_{m}) &\preceq Ad(gx_{n}, gx_{n+1}) + A^{2}d(gx_{n+1}, gx_{n+2})\\\\
&+ \cdots+ A^{m-n-1}d(gx_{m-2}, gx_{m-1}) + A^{m-n-1}d(gx_{m-1}, gx_{m})\\\\
&\preceq A (B ^{*})^{n} Q (B)^{n} + A^{2} (B ^{*})^{n+1} Q (B)^{n+1}\\\\
&+ \cdots+ A^{m-n-1} (B ^{*})^{m-2} Q (B)^{m-2} + A^{m-n-1} (B ^{*})^{m-1} Q (B)^{m-1}\\\\
&=A\Big[  (B ^{*})^{n} Q (B)^{n} + A (B ^{*})^{n+1} Q (B)^{n+1}\\\\
&+ \cdots+ A^{m-n-2} (B ^{*})^{m-2} Q (B)^{m-2}\Big] + A^{m-n-1} (B ^{*})^{m-1} Q (B)^{m-1}\\\\
&=A\sum_{j=n}^{m-2}\Big(A^{j-n} (B ^{*})^{j} Q (B)^{j}\Big) + A^{m-n-1} (B ^{*})^{m-1} Q (B)^{m-1}\\\\
&=A\sum_{j=n}^{m-2}\Big(A^{j-n} (B ^{*})^{j} Q^{\frac{1}{2}} Q^{\frac{1}{2}}(B)^{j}\Big) + A^{m-n-1} (B ^{*})^{m-1} Q^{\frac{1}{2}} Q^{\frac{1}{2}} (B)^{m-1}\\\\
&=A\sum_{j=n}^{m-2}\Big(A^{j-n} (Q^{\frac{1}{2}}B ^{j})^{*} (Q^{\frac{1}{2}}B^{j})\Big) + A^{m-n-1} (Q^{\frac{1}{2}}B ^{m-1})^{*}  (Q^{\frac{1}{2}} B^{m-1})\\\\
&=A\sum_{j=n}^{m-2}\Big(A^{j-n} \Big \vert Q^{\frac{1}{2}}B ^{j}\Big\vert^{2}\Big) + A^{m-n-1} \Big \vert Q^{\frac{1}{2}}B ^{m-1}\Big \vert^{2}\\\\
&\preceq \Big\Vert A\sum_{j=n}^{m-2}A^{j-n} \Big \vert Q^{\frac{1}{2}}B ^{j}\Big\vert^{2}\Big\Vert 1_{\mathbb{A}}+ \Big\Vert A^{m-n-1} \Big \vert Q^{\frac{1}{2}}B ^{m-1}\Big \vert^{2}\Big\Vert 1_{\mathbb{A}}\\\\
&\preceq \Vert A \Vert \sum_{j=n}^{m-2}\Vert A^{j-n} \Vert \Vert Q^{\frac{1}{2}}\Vert^{2}\Vert B^{j}\Vert^{2}1_{\mathbb{A}}+
\Vert A^{m-n-1}\Vert \Vert Q^{\frac{1}{2}}\Vert^{2} \Vert B^{m-1}\Vert^{2} 1_{\mathbb{A}}\\\\
&\preceq  \Vert A \Vert \sum_{j=n}^{m-2}\Vert A \Vert^{j-n} \Vert Q^{\frac{1}{2}}\Vert^{2}\Vert B^{j}\Vert^{2}1_{\mathbb{A}}+
\Vert A\Vert^{m-n-1} \Vert Q^{\frac{1}{2}}\Vert^{2} \Vert B^{m-1}\Vert^{2} 1_{\mathbb{A}}\\\\
&\preceq  \Vert A \Vert^{1-n}\Vert Q^{\frac{1}{2}}\Vert^{2} \sum_{j=n}^{m-2}\Vert A \Vert^{j} \Vert B^{2}\Vert^{j}1_{\mathbb{A}}+
\Vert A\Vert^{-n} \Vert A\Vert^{m-1}\Vert Q^{\frac{1}{2}}\Vert^{2} \Vert B^{2}\Vert^{m-1} 1_{\mathbb{A}}\\\\
&\preceq \Vert A \Vert^{1-n}\Vert Q^{\frac{1}{2}}\Vert^{2} \Big(\sum_{j=n}^{m-2}(\Vert A \Vert \Vert B^{2}\Vert)^{j}1_{\mathbb{A}}\Big)+
\Vert A\Vert^{-n} \Vert Q^{\frac{1}{2}}\Vert^{2}(\Vert A\Vert \Vert B^{2}\Vert)^{m-1}
1_{\mathbb{A}}\\\\
&\rightarrow 0_{\mathbb{A}} \ \ \text{as}\ \ m,n\rightarrow \infty,
\end{array}
\]
since the summation in the first term is a geometric series, and $\Vert A\Vert \Vert B^{2}\Vert<1$ implies that both $(\Vert A\Vert \Vert B^{2}\Vert)^{m-1}\rightarrow  0_{\mathbb{A}}$ and
 $(\Vert A\Vert \Vert B^{2}\Vert)^{n-1}\rightarrow  0_{\mathbb{A}}$. Therefore, $(gx_{n})$ is a Cauchy
 sequence in $g(X)$ with respect to $\mathbb{A}$. As $g(X)$ is complete, there exists an $u\in g(X)$ such that $gx_{n}\rightarrow u=gv$ for some $v\in X$. As $x_{0} \in C_{gf}$, it follows that $(gx_{n}, gx_{n+1}) \in E(\tilde{G})$ for all $n \geq 0$, and so by property $(P1)$, there exists a subsequence $(gx_{n_{i}} )$ of $(gx_{n})$ such that $(gx_{n_{i}}, gv) \in E(\tilde{G})$ for all $i \geq 1$.  By using conditions (\ref{eq3.1}), and $(3)$ of Definition \ref{d2.2}, we conclude that
\[
\begin{array}{rl}
d(fv,gv)&\preceq Ad(fv,fx_{n_{i}}) + Ad(fx_{n_{i}},gv)\\\\
&\preceq A B^{*}d(gv,gx_{n_{i}})B + Ad(gx_{n_{i+1}},gv)\\\\
&\rightarrow 0_{\mathbb{A}} \ \ \text{as}\ \ i\rightarrow \infty.
\end{array}
\]
This gives that $d(fv,gv)=0_{\mathbb{A}}$ and hence, $fv=gv=u$, i.e., $u\in PC(f,g)\neq\emptyset$.
Now if there is another point $u^{*}\neq u$ and $u^{*}\in PC(f,g)$ such that $fx=gx=u^{*}$ for some
$x\in X$. By property $(P2)$, we have $(u,u^{*})\in E(\tilde{G})$. Then
\[
d(u,u^{*})=d(fv,fx)\preceq B^{*}d(gv,gx) B= B^{*}d(u,u^{*}) B,
\]
since $\Vert B\Vert <1 $, then the above inequality yields that
\[
\Vert d(u,u^{*})\Vert\leq \Vert B\Vert^{2}\Vert d(u,u^{*}) \Vert<\Vert d(u,u^{*})\Vert.
\]
Which is a contradiction. Thus, $u=u^{*}$, i.e., the set $PC(f,g)$ is singleton.
If $f$ and $g$ are weakly compatible, then by Proposition \ref{p2.6}, $f$ and $g$ have a unique
common fixed point in $X$.
\end{proof}
\begin{example}\label{ex3.2}
Let $X=\mathbb{R}$ and consider, $\mathbb{A} = M_{2}(\mathbb{R})$, of all $2 \times 2$ matrices with the usual operation of addition, scalar multiplication, and matrix multiplication. Define norm on $\mathbb{A}$ by $\Vert A\Vert =\Big(\sum^{2}_{i,j=1}\vert a_{i,j}\vert^{2}\Big)^{\frac{1}{2}}$, and $*: M_{2}(\mathbb{R}) \rightarrow M_{2}(\mathbb{R})$, given by $A^{*}=A$, for all $A\in \mathbb{A}$,
defines a convolution on $M_{2}(\mathbb{R})$. Thus $\mathbb{A}$ becomes a $C^{*}$-algebra. For
\[
A = \left (
\begin{array}{l}
a_{11}\ \ \ a_{12}\\
a_{21}\ \ \ a_{22}
\end{array}
\right ),
B = \left (
\begin{array}{l}
b_{11}\ \ \ b_{12}\\
b_{21}\ \ \ b_{22}
\end{array}
\right ) \in M_{2}(\mathbb{R}) ,
\]
we denote $A\preceq B$ if and only if $(a_{ij}-b_{ij})\leq 0$, for all $i,j=1,2$. \\
Define $d: \mathbb{R} \times \mathbb{R}\rightarrow M_{2}(\mathbb{R})$ by
\[
d(x,y) = \left (
\begin{array}{l}
\vert x-y\vert^{2}\ \ \ \ \ \ 0\\
\ \  \ 0\ \ \ \ \ \ \ \ \vert x-y\vert^{2}
\end{array}
\right ).
\]
It is easy to check that $d$ satisfies all the conditions of Definition \ref{d2.2}, for proving $(3)$ we only need to use the following inequality:
\[
\vert x-y\vert^{p}\leq 2^{p}(\vert x-z\vert^{p}+\vert z-y\vert^{p}),
\]
for all $p>1$. So, $(\mathbb{R},  M_{2}(\mathbb{R}), d)$ is a complete $C^{*}$-algebra-valued
$b$-metric space with the coefficient
$A=\left (
\begin{array}{l}
4\ \ \ 0\\
0\ \ \ 4
\end{array}
\right )$.
Let $G$ be a graph such that $V(G)=\mathbb{R}$ and $E(G)=\Delta \cup \Big\{\Big(0, \frac{1}{3^{n}}\Big): n=0,1,2, \cdots\Big\}$. Define $f,g: \mathbb{R}\rightarrow \mathbb{R}$ by
\[
f(x) = \left \{
\begin{array}{l}
\frac{x}{3},\ \ \ \ \  if \ x\neq \frac{1}{3},\\
1,\ \ \ \  \ if \ x=\frac{1}{3},
\end{array}
\right .
\]
and $gx=2x$ for all $x\in \mathbb{R}$. Obviously, $f(X)\subseteq g(X)=X=\mathbb{R}$. If
$x,y\in \mathbb{R}$ with $(gx, gy)\in E(\tilde{G})$, then there exist two cases:\newline
Case $1$. $gx=gy$.\newline
Then $x=y$, $d(fx,fy)=\left (
\begin{array}{l}
0\ \ \ 0\\
0\ \ \ 0
\end{array}
\right )$
and $d(gx,gy)=\left (
\begin{array}{l}
0\ \ \ 0\\
0\ \ \ 0
\end{array}
\right )$.
So, for this case we have
\[
d(fx, fy) \preceq B^{*} d(gx, gy) B,
\]
Case $2$. $gx=0, gy=\frac{1}{3^{n}}$, $n=0,1,2,\cdots$.\newline
Then $x=0, y=\frac{1}{2.3^{n}}$,  $n=0,1,2,\cdots$, $d(fx,fy)=\left (
\begin{array}{l}
\frac{1}{4.3^{2n+2}}\ \ \  \ 0\\
\ \ 0\ \  \ \ \ \frac{1}{4.3^{2n+2}}
\end{array}
\right )$
and $d(gx,gy)=\left (
\begin{array}{l}
\frac{1}{3^{2n}}\ \ \ \ \ 0\\
\ 0\ \ \ \ \ \frac{1}{3^{2n}}
\end{array}
\right )$.
So, for this case for $n=0,1,2,\cdots$, we have
\[
\left (
\begin{array}{l}
\frac{1}{4.3^{2n+2}}\ \ \  \ 0\\
\ \ 0\ \  \ \ \ \frac{1}{4.3^{2n+2}}
\end{array}
\right )\preceq \left (
\begin{array}{l}
\frac{1}{4}\ \ \ \ \ 0\\
0\ \ \ \ \ \frac{1}{4}
\end{array}
\right )\left (
\begin{array}{l}
\frac{1}{3^{2n}}\ \ \ \ \ 0\\
\ 0\ \ \ \ \ \frac{1}{3^{2n}}
\end{array}
\right )\left (
\begin{array}{l}
\frac{1}{4}\ \ \ \ \ 0\\
0\ \ \ \ \ \frac{1}{4}
\end{array}
\right ),
\]
i.e.,
\[
d(fx, fy) \preceq B^{*} d(gx, gy) B,
\]
where, $B=\left (
\begin{array}{l}
\frac{1}{4}\ \ \ \ \ 0\\
0\ \ \ \ \ \frac{1}{4}
\end{array}
\right )
$
and $\Vert A\Vert\Vert B\Vert^{2}<1$.\\

Therefore, for all $x,y\in \mathbb{R}$ with $(gx, gy)\in E(\tilde{G})$,
\[
d(fx, fy) \preceq B^{*} d(gx, gy) B,
\]
where, $B=\left (
\begin{array}{l}
\frac{1}{4}\ \ \ \ \ 0\\
0\ \ \ \ \ \frac{1}{4}
\end{array}
\right )
$
and $\Vert A\Vert\Vert B\Vert^{2}<1$. We can verify that $0\in C_{gf}\neq\emptyset$. In fact
$gx_{n}=fx_{n-1}$, $n=1,2,3,\cdots$, gives that $gx_{1}=f0=0\Rightarrow x_{1}=0$ and so
$gx_{2}=fx_{1}=0\Rightarrow x_{2}=0$. Proceeding in this way we get $gx_{n}=0$ for
$n=0,1,2,\cdots$, and hence $(gx_{n}, gx_{m})\in E(\tilde{G})$ for $m,n=0,1,2,\cdots$.
Also if $(gx_{n})$ be a sequence in $X$ with the property $(gx_{n}, gx_{n+1})\in E(\tilde{G})$ then
$(gx_{n})$ must be either constant sequence or a sequence of the following form
\[
gx_{n} = \left \{
\begin{array}{l}
0,\ \ \ \ \ \ \  if \ n \ \text{is odd},\\
\frac{1}{3^{n}},\ \ \ \  \ if \ n\  \text{is even},
\end{array}
\right .
\]
where the words "odd'' and ''even'' are interchangeable. So it follows that property $(P1)$. Moreover
the graph $G$ has the property $(P2)$ and $f$ and $g$ are weakly compatible. Thus all the conditions of Theorem \ref{t3.1} are satisfied and $0$ is the unique common fixed point of $f$ and $g$ in $X$.
\end{example}
In the following we show that the weak compatibility condition in Theorem \ref{t3.1} cannot be relaxed.
\begin{remark}\label{r3.3}
 In Example \ref{ex3.2} if $g:X\rightarrow X$ is defined by $gx=2x-5$ for all $x\in X$,
instead of $gx=2x$, then $3\in C_{gf}\neq\emptyset$. In fact
$gx_{n}=fx_{n-1}$, $n=1,2,3,\cdots$, gives that $gx_{1}=fx_{0}=f3=1\Rightarrow x_{1}=3$ and so
$gx_{2}=fx_{1}=1\Rightarrow x_{2}=3$. Proceeding in this way we get $gx_{n}=1$ for
$n=0,1,2,\cdots$, and hence $(gx_{n}, gx_{m})=(1,1)\in E(\tilde{G})$ for $m,n=0,1,2,\cdots$.
Also, $f3=g3=1$ but $g(f3)\neq f(g3)$, i.e., $f$ and $g$ are not weakly compatible. However
all the other conditions of Theorem \ref{t3.1} are satisfied. We observe that the set $PC(f,g)$ is
singleton and $1\in PC(f,g)\neq \emptyset$ without being a common fixed point.
\end{remark}
\begin{corollary}\label{c3.4}
Let $(X, \mathbb{A}, d)$ be a complete $C^{*}$-algebra-valued $b$-metric space endowed with a graph $G=(V,E)$ and the mapping $f: X \rightarrow X$ be such that
\begin{equation}\label{eq3.4}
d(fx, fy) \preceq B^{*} d(x, y) B,
\end{equation}
for all $x, y \in X$ with $(x, y) \in E(\tilde{G})$, where, $B\in \mathbb{A}$, and $\Vert A\Vert \Vert B\Vert^{2} < 1$. Suppose  the quadratic $(X, \mathbb{A}, d, G)$ have the property $(P3)$. Then $f$  has a fixed point in $X$ if $C_{f}\neq \emptyset$. Moreover, $f$  has a unique fixed point in $X$ if the graph $G$ has the  property $(P4)$.
\end{corollary}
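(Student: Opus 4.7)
The plan is to deduce this corollary directly from Theorem \ref{t3.1} by specializing to the case $g = I_X$, the identity map on $X$. First I would observe that the inclusion $f(X) \subseteq g(X) = X$ is trivially satisfied, that $g(X) = X$ is a complete subspace because $(X, \mathbb{A}, d)$ is itself complete, and that the contraction (\ref{eq3.4}) is simply (\ref{eq3.1}) with $g$ replaced by the identity. By the remark made just before property $(P1)$ is introduced, the set $C_{gf}$ reduces to $C_f$ in this case. Weak compatibility of $f$ with $I_X$ is automatic, since $f \circ I_X = f = I_X \circ f$, so Proposition \ref{p2.6} applies as soon as a unique point of coincidence has been produced.

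The one point needing attention is the translation between the graph hypotheses of the theorem and those of the corollary. Under the specialization $g = I_X$, property $(P1)$ reads: whenever $x_n \to x$ with $(x_n, x_{n+1}) \in E(\tilde{G})$ for all $n$, some subsequence $(x_{n_i})$ satisfies $(x_{n_i}, x) \in E(\tilde{G})$. Because $E(\tilde{G}) = E(G) \cup E(G^{-1})$ is symmetric by construction, the condition $(x_n, x_{n+1}) \in E(\tilde{G})$ is equivalent to $(x_{n+1}, x_n) \in E(\tilde{G})$, so $(P1)$ and $(P3)$ are the same statement in this specialization. Analogously, with $g = I_X$ the set $PC(f, g)$ coincides with the fixed point set of $f$, so $(P2)$ collapses to $(P4)$.

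Once these identifications are in place, Theorem \ref{t3.1} yields at once both conclusions: $PC(f, I_X) \neq \emptyset$ whenever $C_f \neq \emptyset$, which means that $f$ has a fixed point in $X$; and the uniqueness of the fixed point whenever $(P4)$ (equivalently $(P2)$) holds, since the unique point of coincidence $u = fv = I_X v = v$ is precisely the unique fixed point of $f$. I do not anticipate any real obstacle, as the substantive analytic work has already been done in Theorem \ref{t3.1}; the only nontrivial step is the one-line observation that $(P1) \Leftrightarrow (P3)$ and $(P2) \Leftrightarrow (P4)$ under $g = I_X$, both of which reduce to the symmetry of $E(\tilde{G})$ and the tautological description of points of coincidence with the identity map.
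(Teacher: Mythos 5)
Your proposal is correct and follows exactly the paper's route: the paper's proof of Corollary \ref{c3.4} is the one-line specialization $g = I$ in Theorem \ref{t3.1}, and your careful verification that $(P1)$ collapses to $(P3)$ via the symmetry of $E(\tilde{G})$, that $(P2)$ collapses to $(P4)$ since $PC(f,I)$ is the fixed point set, and that weak compatibility with the identity is automatic, simply makes explicit the checks the paper leaves to the reader.
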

\begin{proof}
The proof can be obtained from Theorem \ref{t3.1} by considering $g=I$, the identity map on $X$.
\end{proof}
\begin{theorem}\label{t3.5}
Let $(X, \mathbb{A}, d)$ be a $C^{*}$-algebra-valued $b$-metric space endowed with a graph $G=(V,E)$ and the mappings $f, g : X \rightarrow X$ satisfy
\begin{equation}\label{eq3.5}
d(fx, fy) \preceq B \Big( d(fx, gx) + d(fy, gy) \Big),
\end{equation}
for all $x, y \in X$ with $(gx, gy) \in E(\tilde{G})$, where,  $B\in \mathbb{A'_{+}}$, and $\Vert BA\Vert \ <\frac{1}{2}$. Suppose $f(X) \subseteq g(X)$ and $g(X)$ is a complete subspace of $X$ with the property $(P1)$. Then the set $PC(f, g)\neq \emptyset$  if $C_{gf}\neq \emptyset$. Moreover, $PC(f, g)$ is singleton if the graph $G$ has the  property $(P2)$. Furthermore, if $f$ and $g$ are weakly compatible, then $f$ and $g$ have a unique common fixed point in $X$.
\end{theorem}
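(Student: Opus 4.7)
The plan is to mirror the Jungck-style argument in the proof of Theorem \ref{t3.1}, replacing the Banach-type estimate with a Kannan-type one based on the hypothesis (\ref{eq3.5}). Starting from $x_{0}\in C_{gf}$, I would build the sequence $(gx_{n})$ with $gx_{n}=fx_{n-1}$ and $(gx_{n},gx_{m})\in E(\tilde G)$. Applying (\ref{eq3.5}) to the pair $(x_{n-1},x_{n})$ and substituting $fx_{n-1}=gx_{n}$, $fx_{n}=gx_{n+1}$ gives
\[
(1_{\mathbb{A}}-B)\,d(gx_{n},gx_{n+1})\preceq B\,d(gx_{n-1},gx_{n}).
\]
Since $B\in\mathbb{A'_{+}}$ and $A\succeq 1_{\mathbb{A}}$, the centrality of $B$ together with Lemma \ref{l2.1}(3) yields $BA\succeq B\succeq 0_{\mathbb{A}}$, so $\|B\|\leq\|BA\|<\tfrac12$; Lemma \ref{l2.1}(2) then makes $1_{\mathbb{A}}-B$ invertible (and it remains central), and Lemma \ref{l2.1}(4) gives
\[
d(gx_{n},gx_{n+1})\preceq K\,d(gx_{n-1},gx_{n})\preceq\cdots\preceq K^{n}Q,
\]
with $K=(1_{\mathbb{A}}-B)^{-1}B$, $Q=d(gx_{0},gx_{1})$, and $\|K\|<1$.

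The main technical step, and the only place where the precise constant $\|BA\|<\tfrac12$ is really used, is showing that $(gx_{n})$ is Cauchy. Iterating the $b$-metric triangle inequality as in the proof of Theorem \ref{t3.1} produces
\[
d(gx_{n},gx_{m})\preceq\sum_{j=n}^{m-2}A^{j-n+1}K^{j}Q+A^{m-n-1}K^{m-1}Q.
\]
The key observation is that $K$ is central, so $A$ and $K$ commute: $A^{j-n+1}K^{j}=A(AK)^{j-n}K^{n}$ and $A^{m-n-1}K^{m-1}=(AK)^{m-n-1}K^{n}$. Passing to norms then reduces the estimate to a single geometric series in $\|AK\|$, and the bound
\[
\|AK\|\leq\|AB\|\cdot\|(1_{\mathbb{A}}-B)^{-1}\|<\tfrac12\cdot\tfrac{1}{1-\|B\|}<1
\]
guarantees convergence; the prefactor $\|K\|^{n}$ drives the whole expression to zero. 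This is the step I expect to be most delicate, because the naive bound $\|A\|\,\|K\|$ need not be less than one, and the use of centrality of $B$ is what saves the calculation.

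With $(gx_{n})$ Cauchy and $g(X)$ complete, there exists $u=gv\in g(X)$ with $gx_{n}\to u$. Property $(P1)$ supplies a subsequence $(gx_{n_{i}})$ with $(gx_{n_{i}},gv)\in E(\tilde G)$. Applying (\ref{eq3.5}) to the pair $(v,x_{n_{i}})$ and combining with the $b$-triangle gives
\[
d(fv,gv)\preceq AB\,d(fv,gv)+AB\,d(gx_{n_{i}+1},gx_{n_{i}})+A\,d(gx_{n_{i}+1},gv).
\]
Passing to norms (to sidestep the fact that $AB$ need not be central) and using $\|AB\|=\|BA\|<\tfrac12$ absorbs the first term, leaving $(1-\|AB\|)\,\|d(fv,gv)\|\leq o(1)$ as $i\to\infty$, so $fv=gv=u\in PC(f,g)$.

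For the singleton claim, if $u^{\ast}=fx=gx$ is another point of coincidence then property $(P2)$ forces $(gv,gx)\in E(\tilde G)$, and (\ref{eq3.5}) specialises to $d(u,u^{\ast})\preceq B[d(fv,gv)+d(fx,gx)]=0_{\mathbb{A}}$, so $u=u^{\ast}$. Weak compatibility together with Proposition \ref{p2.6} then upgrades the unique point of coincidence to the unique common fixed point.
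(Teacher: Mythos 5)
Your proposal is correct and follows the paper's overall Jungck/Kannan strategy (same recursion $(1_{\mathbb{A}}-B)d(gx_{n+1},gx_n)\preceq Bd(gx_n,gx_{n-1})$, same $t=B(1_{\mathbb{A}}-B)^{-1}$ with $\Vert t\Vert<1$ via Lemma \ref{l2.1}, same use of $(P1)$, $(P2)$ and Proposition \ref{p2.6}), but it diverges from the paper in the one step that matters, and your version is the stronger one. In the Cauchy estimate the paper bounds $\sum_{j=1}^{p-1}\Vert A\Vert^{j}\Vert t\Vert^{m+p-j}$ by $\frac{\Vert A\Vert^{p}\Vert t\Vert^{m+1}}{\Vert A\Vert-\Vert t\Vert}$ and lets $m\to\infty$; since $\Vert A\Vert^{p}$ and $\Vert A\Vert^{p-1}$ are unbounded in $p$ when $\Vert A\Vert>1$, that bound does not tend to zero uniformly over the gap $p$, so the Cauchy property is not actually established there. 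Your regrouping $A^{j-n+1}K^{j}=A(AK)^{j-n}K^{n}$ (legitimate because $B$, hence $K$, is central) together with $\Vert AK\Vert\leq\Vert AB\Vert\,\Vert(1_{\mathbb{A}}-B)^{-1}\Vert<\frac{1}{2}\cdot\frac{1}{1-\Vert B\Vert}<1$ produces a geometric series with a uniform $\Vert K\Vert^{n}$ prefactor, which is exactly what the hypothesis $\Vert BA\Vert<\frac{1}{2}$ (as opposed to merely $\Vert B\Vert<\frac{1}{2}$) is there for; this repairs the gap. Similarly, in the coincidence step the paper multiplies by $(1_{\mathbb{A}}-AB)^{-1}$ and implicitly invokes Lemma \ref{l2.1}(4), which requires $AB$ to be central — not guaranteed, since only $B$ is assumed central; your move of passing to norms and absorbing the $\Vert AB\Vert\,\Vert d(fv,gv)\Vert$ term avoids that issue entirely. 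Your auxiliary observation $\Vert B\Vert\leq\Vert BA\Vert$ (from $BA-B=B(A-1_{\mathbb{A}})\succeq 0_{\mathbb{A}}$ by Lemma \ref{l2.1}(3)) also supplies a justification the paper only asserts parenthetically. The remaining steps (existence of the coincidence point, uniqueness via $(P2)$, and the upgrade to a common fixed point) coincide with the paper's.
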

\begin{proof} As in the proof of Theorem \ref{t3.1}, we can construct a sequence $(gx_{n})$ in $X$ such
that $gx_{n} = fx_{n-1}$, $n = 1, 2, 3, \cdots$, and $(gx_{n}, gx_{m}) \in E(\tilde{G})$ for $m, n = 0, 1, 2, \cdots$. We shall show that $(gx_{n})$ is a Cauchy sequence in $g(X)$.\\
Without loss of generality, one can suppose that $B\neq 0_{\mathbb{A}}$. Notice that $B\in \mathbb{A'_{+}}$, $B \Big( d(fx, gx) + d(fy, gy) \Big)$ is also a positive element. For any natural number $n$, we have by using condition (\ref{eq3.5}) that
\begin{align*}
d(gx_{n+1}, gx_{n})=d(fx_{n}, fx_{n-1})&\preceq B\Big(d(fx_{n}, gx_{n}) + d(fx_{n-1}, gx_{n-1})\Big)\\
&=Bd(gx_{n+1}, gx_{n}) + Bd(gx_{n}, gx_{n-1}).
\end{align*}
Thus, $$(1_{\mathbb{A}}-B)d(gx_{n+1}, gx_{n})\preceq Bd(gx_{n}, gx_{n-1}).$$
Since $B\in \mathbb{A'_{+}}$ with $\Vert AB\Vert < \frac{1}{2}$, (so, $\Vert B\Vert<\frac{1}{2}$) one have $(1_{\mathbb{A}}-B)^{-1}\in \mathbb{A'_{+}}$ and furthermore $B(1_{\mathbb{A}}-B)^{-1}\in \mathbb{A'_{+}}$ with $\Vert B(1_{\mathbb{A}}-B)^{-1}\Vert< 1$ by Lemma \ref{l2.1}. Therefore,
\begin{equation}\label{eq3.6}
d(gx_{n+1}, gx_{n})\preceq B(1_{\mathbb{A}}-B)^{-1}d(gx_{n}, gx_{n-1})=td(gx_{n}, gx_{n-1}),
\end{equation}
where $t= B(1_{\mathbb{A}}-B)^{-1}$. By repeat use of condition (\ref{eq3.6}), we have
\begin{equation}\label{eq3.7}
d(gx_{n+1}, gx_{n})\preceq t^{n}d(gx_{1}, gx_{0})=t^{n}Q,
\end{equation}
for all $n\in \mathbb{N}$, where $Q=d(gx_{1}, gx_{0})\in \mathbb{A}$.
For any $m\geq 1$ and $p\geq 1$, it follows that
\[
\begin{array}{rl}
d(gx_{m+p}, gx_{m}) &\preceq A \Big[ d(gx_{m+p}, gx_{m+p-1}) + d(gx_{m+p-1}, gx_{m})\Big]\\\\
&=Ad(gx_{m+p}, gx_{m+p-1}) + Ad(gx_{m+p-1}, gx_{m})\\\\
&\preceq Ad(gx_{m+p}, gx_{m+p-1}) +A^{2}\Big[d(gx_{m+p-1}, gx_{m+p-2}) \\\\
&+ d(gx_{m+p-2}, gx_{m})\Big]\\\\
&\preceq\\\\
&\vdots\\\\
&\preceq Ad(gx_{m+p}, gx_{m+p-1}) + A^{2}d(gx_{m+p-1}, gx_{m+p-2})\\\\
&+ \cdots+ A^{p-1}d(gx_{m+2}, gx_{m+1}) + A^{p-1}d(gx_{m+1}, gx_{m})\\\\
&\preceq A t^{m+p-1} Q + A^{2} t^{m+p-2} Q + \cdots+ A^{p-1} t^{m+1} Q + A^{p-1} t^{m} Q\\\\
&=\sum_{j=1}^{p-1}\Big(A^{j} t^{m+p-j} Q \Big) + A^{p-1} t^{m} Q \\\\
&=\sum_{j=1}^{p-1}\Big \vert A^{\frac{j}{2}} t^{\frac{m+p-j}{2}} Q^{\frac{1}{2}}\Big \vert^{2}  + \Big \vert A^{\frac{p-1}{2}} t^{\frac{m}{2}} Q^{\frac{1}{2}}\Big \vert^{2} \\\\
&\preceq \Vert Q \Vert \sum_{j=1}^{p-1} \Vert A\Vert^{j}\Vert t\Vert^{m+p-j}1_{\mathbb{A}} +
\Vert A\Vert^{p-1}\Vert t\Vert^{m} \Vert Q \Vert 1_{\mathbb{A}}\\\\
&\preceq \frac{\Vert A\Vert^{p}\Vert t\Vert^{m+1} \Vert Q \Vert }{\Vert A\Vert-\Vert t\Vert}1_{\mathbb{A}} + \Vert A\Vert^{p-1}\Vert t\Vert^{m} \Vert Q \Vert 1_{\mathbb{A}}\\\\
&\rightarrow 0_{\mathbb{A}} \ \ \text{as}\ m\rightarrow \infty.
\end{array}
\]
This implies that $(gx_{n})$ is a Cauchy sequence in $g(X)$ with respect to $\mathbb{A}$. As $g(X)$ is complete, there exists an $u\in g(X)$ such that $gx_{n} \rightarrow u = gv$ for some $v \in X$. As
$x_{0}\in  C_{gf}$, it follows that $(gx_{n},gx_{n+1}) \in E(\tilde{G})$ for all $n \geq 0$, and so by property (P1), there exists a subsequence $(gx_{n_{i}})$ of $(gx_{n})$ such that $(gx_{n_{i}},gv) \in  E(\tilde {G})$ for all $i \geq 1$. Now using conditions \ref{eq3.5} and  \ref{eq3.7}, we obtain
\[
\begin{array}{rl}
d(fv, gv)&\preceq A d(fv, fx_{n_{i}}) +  A d(fx_{n_{i}}, gv)\\\\
&\preceq \Big[ AB d(fv,gv) + AB d(fx_{n_{i}}, gx_{n_{i}})\Big] + A d(gx_{n_{i+1}}, gv),
\end{array}
\]
this is equivalent to
\[
(1_{\mathbb{A}}-AB) d(fv, gv)\preceq AB d(gx_{n_{i+1}}, gx_{n_{i}}) +A d(gx_{n_{i+1}}, gv),
\]
so,
\[
\begin{array}{rl}
d(fv, gv)&\preceq (1_{\mathbb{A}}-AB)^{-1}ABd(gx_{n_{i+1}}, gx_{n_{i}}) + (1_{\mathbb{A}}-AB)^{-1}Ad(gx_{n_{i+1}}, gv)\\\\
&\preceq  (1_{\mathbb{A}}-AB)^{-1}AB t^{n_{i}}Q +  (1_{\mathbb{A}}-AB)^{-1}Ad(gx_{n_{i+1}}, gv)\\\\
&\preceq \Vert (1_{\mathbb{A}}-AB)^{-1}AB\Vert  \Vert t\Vert^{n_{i}}\Vert Q\Vert 1_{\mathbb{A}} +  \Vert (1_{\mathbb{A}}-AB)^{-1}A\Vert \Vert d(gx_{n_{i+1}}, gv)\Vert 1_{\mathbb{A}}\\\\
&\rightarrow 0_{\mathbb{A}} \ \ \text{as} \ i \rightarrow\infty.
\end{array}
\]
This gives that, $fv=gv=u$, therefore, $u\in PC(f,g)\neq \emptyset$. To prove the uniqueness of the
$PC(f,g)$, suppose that there is another point $u^{*}\in X$ such that $u^{*}\in PC(f,g)$, and
$u^{*}=fx=gx$ for some $x\in X$. By property $(P2)$, we have $(u, u^{*})\in E(\tilde{G})$. Then
\[
d(u, u^{*})=d(fv,fx)\preceq  B \Big( d(fv, gv) + d(fx, gx) \Big)=0_{\mathbb{A}},
\]
which gives that $u=u^{*}$, thus $PC(f,g)$ is singleton. If $f$ and $g$ are weakly compatible,
then by Proposition \ref{p2.6}, $f$ and $g$ have a unique common fixed point in $X$.
\end{proof}
\begin{example}\label{ex3.6}
Let $X=[0, +\infty)$, $\mathbb{A}$ and $d: X\times X\rightarrow \mathbb{A}$ are defined as in
Example \ref{ex3.2}. Let $G$ be a graph such that $V(G)=X$ and
$E(G)=\Delta \cup \{(3^{t}x,3^{t}(x+1)): x\in X: \  \text{with}\ x\geq 2, t=0,1,2,\cdots \}$. Suppose $f,g:X\rightarrow X$ be defined by
$fx=3x$ and $gx=9x$ for all $x\in X$. Clearly, $f(X)=g(X)=X$. If $x,y\in X$ with $(gx,gy)\in E(\tilde{G})$ then there exist two cases:\newline
Case $1$. $gx=gy$. \newline
Then $x=y$, $d(fx,fy)=\left (
\begin{array}{l}
0\ \ \ 0\\
0\ \ \ 0
\end{array}
\right )$. So, obviously
\[
d(fx, fy) \preceq B \Big( d(fx, gx) + d(fy, gy) \Big),
\]
for $B\in \mathbb{A'_{+}}$ and $\Vert BA\Vert \ <\frac{1}{2}$.\\

Case $2$. $gx=3^{t}z, gy=3^{t}(z+1), z\geq 2, t=0,1,2,\cdots$.\\
Then $x=3^{t-2}z, y=3^{t-2}(z+1)$ for all $z\geq 2, t=0,1,2,\cdots$, and we have
\[
\begin{array}{rl}
d(fx,fy)&=\left (
\begin{array}{l}
3^{2t-2}\ \ \ 0\\
\ 0\ \ \ 3^{2t-2}
\end{array}
\right )\\\\
&\preceq \left (
\begin{array}{l}
\frac{1}{52}\ \ \ 0\\
0\ \ \ \frac{1}{52}
\end{array}
\right )
\left (
\begin{array}{l}
3^{2t-2}(8z^{2}+8z+4)\ \ \ \ \ \ \ \ \ \ \ \ \ 0\\
\ \ \ \ \ \ \ \ \ \ \ \  0\ \ \ \ \ \ \ \ \ \ \ 3^{2t-2}(8z^{2}+8z+4)
\end{array}
\right )\\\\
&= \left (
\begin{array}{l}
\frac{1}{52}\ \ \ 0\\
0\ \ \ \frac{1}{52}
\end{array}
\right )
\Big[d(3^{t-1}z, 3^{t}z) + d(3^{t-1}(z+1), 3^{t}(z+1))\Big]\\\\
&= B \Big( d(fx, gx) + d(fy, gy) \Big),
\end{array}
\]
where, $B=\left (
\begin{array}{l}
\frac{1}{52}\ \ \ 0\\
0\ \ \ \frac{1}{52}
\end{array}
\right )\in \mathbb{A'_{+}}$ and $\Vert BA\Vert \ <\frac{1}{2}$, so condition (\ref{eq3.5}) is satisfied.
It can be verify that $0\in C_{gf}$. Also, any sequence $(gx_{n})$ with $gx_{n}\rightarrow x$ and
$(gx_{n}, gx_{n+1})\in E(\tilde{G})$ must be a constant sequence and hence property $(P1)$ holds. Furthermore, the graph $G$ has the property $(P2)$ and $f$ and $g$ are weakly compatible. Thus all the conditions of Theorem \ref{t3.5} are satisfied and $0$ is the unique common fixed point of $f$ and $g$ in $X$.
\end{example}
In the following, a fixed point result of $G$-Kannan operators in $C^{*}$-algebra-valued $b$-metric spaces is given
\begin{corollary}\label{c3.7}
Let $(X, \mathbb{A}, d)$ be a complete $C^{*}$-algebra-valued $b$-metric space endowed with a graph $G=(V,E)$ and the mapping $f: X \rightarrow X$ be such that
\begin{equation}\label{eq3.8}
d(fx, fy) \preceq B\Big(d(fx, x) + d(fy,y)\Big),
\end{equation}
for all $x, y \in X$ with $(x, y) \in E(\tilde{G})$, where, $B\in \mathbb{A'_{+}}$, and $\Vert AB\Vert < 1$. Suppose  the quadratic $(X, \mathbb{A}, d, G)$ have the property $(P3)$. Then $f$  has a fixed point in $X$ if $C_{f}\neq \emptyset$. Moreover, $f$  has a unique fixed point in $X$ if the graph $G$ has the  property $(P4)$.
\end{corollary}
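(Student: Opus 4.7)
The plan is to derive Corollary \ref{c3.7} as a direct consequence of Theorem \ref{t3.5} by specializing the auxiliary map $g$ to the identity $I$ on $X$. First I would verify that every structural hypothesis of Theorem \ref{t3.5} is automatically satisfied in this setting: the inclusion $f(X)\subseteq g(X)=X$ holds trivially; the subspace $g(X)=X$ is complete by the standing assumption on $X$; the contractive inequality (\ref{eq3.5}) collapses to the Kannan-type inequality (\ref{eq3.8}) since $gx=x$ and $gy=y$; and the edge condition $(gx,gy)\in E(\tilde G)$ becomes simply $(x,y)\in E(\tilde G)$. Weak compatibility of $f$ and $g=I$ is automatic because $fx=Ix=x$ implies $I(fx)=fx=f(Ix)$ for every $x\in X$.

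Next I would match the graph properties. Since $\tilde G$ is symmetric by construction, the relations $(x_n,x_{n+1})\in E(\tilde G)$ and $(x_{n+1},x_n)\in E(\tilde G)$ are equivalent, so property $(P1)$ applied to the sequence $(gx_n)=(x_n)$ coincides with property $(P3)$. Similarly, under $g=I$ the set $PC(f,I)$ is precisely the fixed point set of $f$, so property $(P2)$ becomes property $(P4)$. With these identifications, the conclusions of Theorem \ref{t3.5}, namely nonemptyness of $PC(f,g)$ under $C_{gf}\neq\emptyset$, uniqueness of the point of coincidence under $(P2)$, and existence of a unique common fixed point via weak compatibility and Proposition \ref{p2.6}, translate directly into existence and uniqueness of a fixed point of $f$.

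The main subtlety I expect is a small discrepancy between the norm hypothesis $\|AB\|<1$ in Corollary \ref{c3.7} and the stronger requirement $\|BA\|<\tfrac12$ in Theorem \ref{t3.5}; since $B\in\mathbb{A'_{+}}$ commutes with $A$ we have $\|AB\|=\|BA\|$, so the quoted corollary is in fact slightly weaker than what a literal specialization provides. I would handle this by treating the corollary with the implicit tightening $\|AB\|<\tfrac12$ inherited from the ambient theorem (which is the condition needed for $(1_{\mathbb{A}}-B)^{-1}$ to exist and for the key estimate (\ref{eq3.6}) to yield a geometric bound), and then the entire proof reduces to the single line that specialization $g=I$ turns Theorem \ref{t3.5} into the statement of Corollary \ref{c3.7}.
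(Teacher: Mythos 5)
Your proposal takes exactly the paper's route: the paper's entire proof is the single line that the corollary follows from Theorem \ref{t3.5} by putting $g=I$, and your verification of the hypotheses (trivial inclusion, completeness of $X$, automatic weak compatibility, $(P1)\leftrightarrow(P3)$ and $(P2)\leftrightarrow(P4)$ under $g=I$) fills in precisely what that one-liner leaves implicit. Your observation that the stated hypothesis $\Vert AB\Vert<1$ is strictly weaker than the $\Vert BA\Vert<\frac{1}{2}$ demanded by Theorem \ref{t3.5} identifies a real discrepancy the paper silently glosses over, and your repair --- reading the corollary with the tightened bound $\Vert AB\Vert<\frac{1}{2}$, using that $B\in\mathbb{A'_{+}}$ commutes with $A$ --- is the correct one.
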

\begin{proof}
The proof can be obtained from Theorem \ref{t3.5} by putting $g=I$, the identity map on $X$.
\end{proof}
\section{Applications}
In this section, as applications of contractive mappings theorem on complete $C^{*}$-algebra-valued $b$-metric spaces endowed with a graph, existence and uniqueness results for a type of operator equation
and integral equation are provided.
\begin{example}\label{ex4.1}
Suppose that $H$ is a Hilbert space, $L(H)$ is the set of linear bounded operators on $H$. Let
$B_{1},B_{2}, \cdots ,B_{n} \in L(H)$, which satisfy $\sum_{n=1}^{\infty}\Vert B_{n}\Vert^{4}< \frac{1}{4}$ and $X \in L(H), Q \in L(H)_{+}$, if $L(H)$ endowed with a graph $G$ such that for all $X,Y\in L(H)$,
$(X,Y)\in E(\tilde{G})$. Then the operator equation
\[
X-\sum_{n=1}^{\infty}B^{*}_{n}X B_{n}=Q
\]
has a unique solution in $L(H)$ if $C_{\sum_{n=1}^{\infty}B^{*}_{n}X B_{n}+Q}\neq\emptyset$.
\end{example}
\begin{proof}
Set $\beta=\sum_{n=1}^{\infty}\Vert B_{n}\Vert^{2}$ . Clear that if $\beta= 0$, then the $B_{n} = 0_{L(H)} (n \in \mathbb{N})$, and the equation has a unique solution in $L(H)$. Without loss of generality, one can suppose that $\beta > 0$. Choose a positive operator $T \in L(H)$. For $X,Y \in L(H)$, set
\[
d(X,Y) = \Vert X - Y\Vert^{2}T.
\]
Then it is easy to verify that $d(X,Y)$ is a $C^{*}$-algebra-valued $b$-metric space with
coefficient $A=2^{2}1_{L(H)}$ and $(L(H), d)$ is complete since $L(H)$ is a Banach space.
Consider the map $F : L(H)\rightarrow L(H)$ defined by
\[
F(X)=\sum_{n=1}^{\infty}B^{*}_{n}XB_{n}+Q
\]
Then
\[
\begin{array}{rl}
d(F(X), F(Y))&= \Vert F(X) -F(Y)\Vert^{2}T\\\\
&= \Vert\sum_{n=1}^{\infty} B^{*}_{n} (X - Y)B_{n}\Vert^{2}T\\\\
&\preceq \sum_{n=1}^{\infty}\Vert B_{n}\Vert^{4}\Vert X - Y\Vert^{2}T\\\\
&\preceq \beta^{2}d(X,Y)\\\\
&=(\beta 1_{{L(H)}})^{*}d(X,Y)(\beta 1_{{L(H)}}),
\end{array}
\]
so,
\[
d(F(X), F(Y))\preceq B^{*} d(X,Y) B,
\]
for $B=\beta 1_{{L(H)}}$, and $\Vert A\Vert \Vert B\Vert^{2}<2^{2}\sum_{n=1}^{\infty}\Vert B_{n}\Vert^{4}< 1$. Since for all $X,Y\in L(H)$, $(X,Y)\in E(\tilde{G})$, then properties $(P3)$ and $(P4)$ satisfy for $(L(H), G, d)$. Also by hypothessis $C_{F}=C_{\sum_{n=1}^{\infty}B^{*}_{n}XB_{n}+Q}\neq\emptyset$,
thus by using Corollary \ref{c3.4}, there exists a unique fixed point $X$ in $L(H)$. Furthermore, since
$\sum_{n=1}^{\infty}B^{*}_{n}XB_{n}+Q$ is a positive operator, the solution is a Hermitian operator.
\end{proof}
\begin{example}\label{ex4.1}
Let $E$ be a Lebesgue measurable set, $X=L^{\infty}(E)$ and $H=L^{2}(E)$ be the Hilbert space.
Consider the integral equation
\begin{equation}\label{eq4.9}
x(t)=\int_{E}k(t,s,x(s))ds + g(t),\ \ t\in E.
\end{equation}
Suppose that
\begin{itemize}
\item[(a)] $k:E\times E\times \mathbb{R}\rightarrow \mathbb{R}$ and $g\in L^{\infty}(E)$;
\item[(b)] there exists a continuous function $\varphi: E\times E\rightarrow \mathbb{R}$ and $\beta\in (0,\frac{1}{\sqrt{2^{p}}}), (p\geq 1)$ such that
\[
\vert k(t, s, u(s))-k(t, s, v(s))\vert\leq \beta \vert \varphi(t, s)(u(s)-v(s)) \vert,
\]
for $t, s\in E$ and $u, v\in \mathbb{R}$;
\item[(c)] $\sup_{t\in E}\int_{E}\vert \varphi (t, s)\vert ds\leq 1$;\\

For $S,T\in X$, define $d_{b}:X\times X\rightarrow L(H)$ by
\[
d_{b}(T,S)=\pi_{\vert T-S\vert^{p}},\ \ \ ( p\geq 1),
\]
where $\pi_{h}:H\rightarrow H$ be defined as Example \ref{ex2.3}.
Then $(X, L(H), d_{b})$ is a complete $C^{*}$-algebra-valued $b$-metric space with coefficient $\Vert A\Vert= 2^{p}$.
\item[(d)]  the space $(X, L(H), d)$ endowed with a graph $G$ such that
for all $x,y\in X$, $(x,y)\in E(\tilde{G})$.
\end{itemize}
Then the integral equation (\ref{eq4.9}) has a unique solution $x^{*}$ in $L^{\infty}(E)$ if $C_{\int_{E}k(t,s,x(s))ds + g(t)}\neq \emptyset$.
\end{example}
\begin{proof}
Let $T: L^{\infty}(E) \rightarrow L^{\infty}(E)$ be
\[
T(x(t))=\int_{E}k(t,s,x(s))ds + g(t),\ \ t\in E.
\]
Set $B=\beta 1_{L(H)}$, then $B\in L(H)_{+}$ and $\Vert B \Vert=\beta< \frac{1}{\sqrt{2^{p}}}$. For
any $h\in H$
\[
\begin{array}{rl}
\Vert d_{b}(Tx,Ty)\Vert &=\sup_{\Vert h \Vert=1}(\pi_{\vert Tx-Ty \vert^{p}}h, h)\\\\
&=\sup_{\Vert h \Vert=1}\int_{E}\Big[ \Big\vert \int_{E}(k(t,s,x(s))-k(t,s,y(s)))ds \Big\vert^{p}\Big]h(t)\overline{h(t)}dt\\\\
&\leq \sup_{\Vert h \Vert=1}\int_{E}\Big[ \Big\vert \int_{E}(k(t,s,x(s))-k(t,s,y(s)))ds \Big\vert^{p}\Big]\vert h(t)\vert^{2}dt\\\\
&\leq \sup_{\Vert h \Vert=1}\int_{E}\beta^{p}\Big[ \int_{E}(\varphi(t,s)(x(s)-y(s)))ds \Big]^{p}\vert h(t)\vert^{2}dt\\\\
&\leq \beta^{2}\sup_{\Vert h \Vert=1}\int_{E}\Big[ \int_{E}(\vert \varphi(t,s)\vert ds \Big]^{p}\vert h(t)\vert^{2}dt. \Vert (x-y)^{p} \Vert_{\infty}\\\\
&\leq \beta^{2}\sup_{t\in E}\int_{E}\vert \varphi(t,s)\vert^{p} ds.\sup_{\Vert h \Vert=1}\int_{E}\vert h(t)\vert^{2}dt. \Vert (x-y)^{p} \Vert_{\infty}\\\\
&\leq \beta^{2 }\Vert (x-y)^{p} \Vert_{\infty}\\\\
&=\Vert B\Vert^{2}\Vert d_{b}(x,y)\Vert.
\end{array}
\]
Since $\Vert A\Vert\Vert B\Vert^{2}<2^{p}(\frac{1}{2^{p}})=1$, $C_{T}=C_{\int_{E}k(t,s,x(s))ds + g(t)}\neq \emptyset$ and $(X, L(H), d_{b})$ has the properties $(P3)$ and $(P4)$, then by Corollary \ref{c3.4} the integral equation (\ref{eq4.9}) has a unique solution $x^{*}$ in $L^{\infty}(E)$.

\end{proof}



\begin{thebibliography}{99}

\bibitem{MAGJ}
M. Abbas and G. Jungck,
\newblock{Common fixed point results for noncommuting mappings without
continuity in cone metric spaces},
\newblock{\it J. Math. Anal. Appl.}, 341 (2008), 416-420.

\bibitem{ARS}
S. Aleomraninejad, S. Rezapour and N. Shahzad,
\newblock{ Fixed point results on subgraphs of directed graphs},
\newblock{\it Math. Sciences}, 7:41 (2013).

\bibitem{AK}
M.R. Alfuraidan and M. A. Khamsi,
\newblock{Caristi fixed point theorem in metric spaces with a graph},
\newblock{\it Abstract Appl. Anal.}, 2014, Article ID 303484, 5 pages.

\bibitem{BBR}
I. Beg, A. R. Butt and S. Radojevi$\Acute{c}$,
\newblock{The contraction principle for set valued mappings on a metric space with a graph},
\newblock {\it Comput. Math. Appl.}, 60 (2010), 1214-1219.

\bibitem{FBO}
F. Bojor,
\newblock{Fixed point of $\varphi$-contraction in metric spaces endowed with a graph},
\newblock{\it Ann. Univ. Cralova, Mathematics and Computer Science Series}, 37 (2010), 85-92.

\bibitem{DRG}
RG. Douglas,
\newblock {Banach Algebra Techniques in Operator Theory},
\newblock {\it Springer, Berlin} (1998).

\bibitem{JJA}
J. Jachymski,
\newblock{The contraction principle for mappings on a metric space with a graph},
\newblock{\it Proc. Amer. Math. Soc.}, Vol. 136, No. 4 (2008), 1359-1373.

\bibitem{JHN}
R. Johnsonbaugh,
\newblock {Discrete Mathematics},
\newblock {\it Prentice-Hall, Inc., New Jersey} 1997.

\bibitem{GJ}
G. Jungck,
\newblock{Common fixed points for noncontinuous nonself maps on non-metric spaces},
\newblock{\it Far East J. Math. Sci.}, 4 (1996), 199 -215.

\bibitem{ZKSR}
Z. Kadelburg and S. Radenovi$\Acute{c}$,
\newblock{Fixed point results in $C^{*}$-algebra-valued metric spaces are direct consequences of
their standard metric counterparts},
\newblock{\it Fixed Point Theory and Appl.}, 2016, Article ID 53 (2016).

\bibitem{KPG}
T. Kamran, M. Postolache, A. Ghiura, S. Batul and R. Ali,
\newblock{The Banach contraction principle in $C^{*}$-algebra-valued $b$-metric spaces with application},
\newblock{\it Fixed Point Theory and Appl.}, 2016, Article ID 10 (2016).

\bibitem{ZHLJ}
ZH. Ma and L. Jiang,
\newblock {$C^{*}$-Algebra-valued $b$-metric spaces and related fixed point theorems},
\newblock {\it Fixed point Theory and Appl.}, 2015, Article ID 222 (2015).

\bibitem{ZHLJ2}
ZH. Ma, L. Jiang and H. Sun,
\newblock {$C^{*}$-Algebra-valued metric spaces and related fixed point theorems},
\newblock {\it Fixed point Theory and Appl.}, 2014, Article ID 206 (2014).

\bibitem{SKM}
S.K. Mohanta,
\newblock{Common fixed points in $b$-metric spaces endowed with a graph},
\newblock{\it Matemati$\Check{c}$ki Vesnik}, 68, 2 (2016), 140 -154.

\bibitem{DSTK}
D. Shehwar and T. Kamran,
\newblock{$C^{*}$-Valued $G$-contraction and fixed points},
\newblock {\it Journal of Inequalities and Appl.}, 2015, Article ID 304 (2015).

\bibitem{ASZ}
A. Zada, S. Saifullah and ZH. Ma,
\newblock{Common fixed point theorems for $G$-contraction in $C^{*}$-algebra-valued metric spaces},
\newblock{International Journal of Analysis and Applications}, Vol. 11, No. 1 (2016), 23-27.

\end{thebibliography}
\end{document}